\documentclass[12pt]{article}
\usepackage{amsmath,amssymb,latexsym}
\usepackage{enumerate}
\usepackage{amsthm}
\usepackage{graphicx}
\usepackage{epsfig}

\setlength{\topmargin}{.00in}
\setlength{\textheight}{8.0in}
\newcommand\NN{\mathrm{I\!N}}
\newcommand\RR{\mathrm{I\!R}}
\newcommand\CC{\mathbb{C}}

\newtheorem{theorem}{Theorem}[section]
\newtheorem{definition}[theorem]{Definition}
\newtheorem{corollary}[theorem]{Corollary}

\newtheorem{remark}[theorem]{Remark}
\newtheorem{proposition}[theorem]{Proposition}

\newcommand{\address}{Address: Department of Mathematics, University of North Texas, 1155 Union Circle \#311430, Denton, TX 76203-5017, USA; E-mail: tobeymathis@my.unt.edu, kiko.kawamura@unt.edu}

\numberwithin{equation}{section}

\title{Revolving sequences and Terdragon}

\author{Tobey Mathis and Kiko Kawamura \\University of North Texas \footnote{\address}}

\begin{document}

\maketitle

\begin{abstract}
In 1970, Davis and Knuth introduced the concept of revolving sequences to represent Gaussian integers. Much later, Kawamura and Allen recently generalized this idea to a wider class of revolving sequences that parametrize certain self-similar fractals including the Levy Dragon and Tiling Dragon, which are the unique compact solution of certain families of Iterated Function Systems.

In this paper, we build on the work of Kawamura and Allen to include a wider collection of Iterated Function Systems and introduce a new type of revolving sequence which parametrizes a different family of self-similar fractals including the Terdragon.
\end{abstract}

\section*{Introduction}

The history of systematic mathematical research on self-similar fractals dates back to 1981, when Hutchinson proved the following celebrated theorem~\cite{Hutchinson-1981}: For any finite family of similar contractions $\{\psi_{1},\psi_{2},\dots,\psi_{m}\}$ on $\RR^{n}$, which is called an iterated function system or IFS, there exists a unique non-empty compact solution $X$ of the set equation:
$$X=\bigcup_{i=1}^{m}\psi_{i}(X).$$ 
We call $X$ an attractor or a self-similar set for the IFS. 

\medskip
Before Hutchinson, it is worth mentioning that Williams~\cite{Williams-1971} in 1971 essentially proved that the following set is a self-similar set.  
\begin{equation*}
X=\text{closure} \Big( \bigcup_{1 \leq i_1, i_2, \cdots i_n \leq m}\text{Fix} (\psi_{i_1} \circ \psi_{i_2} \circ \cdots \circ \psi_{i_n}) \Big),
\end{equation*}
where $\text{Fix}(\psi_i)$ is the unique fixed point of $\psi_i$. 
\medskip

Around the same time, in 1970, C.~Davis and D.~E.~Knuth~\cite{Davis+Knuth-1970} introduced the notion of {\it revolving representations} of a Gaussian integer: for any $z=x+iy$ with $x, y \in \mathbb{Z}$,  there exists a revolving sequence $(\delta_{0}, \delta_{1},\dots \delta_{n})$ such that  
\begin{equation*}
  z=\sum_{k=0}^{n} \delta_{n-k}(1+i)^{k}, 
\end{equation*}
where $\delta_{k} \in \{0, 1, -1, i, -i\}$ with the restriction that the non-zero values must follow the cyclic pattern from left to right:
$$\cdots \to 1 \to (-i) \to (-1) \to i \to 1 \to \cdots.$$
Notice that a revolving sequence is created by a 90 degree angle of rotation.

\medskip
Recently, Kawamura and Allen~\cite{Kawamura-Allen-2020} defined {\it generalized revolving sequences}, where the 90 degree angle of rotation is replaced with a more general angle $\theta$ where $|\theta|=\frac{2\pi q}{p}\leq\pi$. In other words, $\delta_{k} \in \{0, 1, e^{i \theta}, e^{2i \theta}, \cdots, e^{(p-1)i \theta}\}$ with the restriction that the non-zero values must follow the cyclic pattern from left to right:
$$\cdots \to 1 \to e^{i \theta} \to e^{2i \theta} \to e^{3i \theta} \to e^{4i \theta} \to \cdots.$$ 

Define $W_{\theta}$ as the set of all generalized revolving sequences with parameter $\theta$. Kawamura and Allen considered a relationship between $W_{\theta}$ and self-similar sets and found new parametrized expressions for certain self-similar sets. 

\begin{theorem}[Kawamura-Allen]
Let $X_{1,\alpha,\theta}$ be the self-similar set generated by the iterated function system (IFS): 
\begin{equation}
\label{eq:IFS1} 
\begin{cases}
 \psi_1(z)=\alpha z,& \\
 \psi_2(z)=(\alpha e^{i \theta}) z+\alpha,
\end{cases}
\end{equation}
where $\alpha \in \CC$ such that $|\alpha|<1$.  

The self-similar set $X_{1,\alpha,\theta}$ has the following parametrized expression: 
\begin{equation}
X_{1,\alpha,\theta}=\left\{ \sum_{n=1}^{\infty} \delta_{n}\alpha^{n}: \delta_{j_1}=1, (\delta_{1}, \delta_{2},\dots )\in W_{\theta} \right\},
\label{eq:X1}
\end{equation}
where $j_{1}:=\min\{j: \delta_{j} \not= 0\}$.
\end{theorem}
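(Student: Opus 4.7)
The plan is to denote by $Y$ the set on the right-hand side of \eqref{eq:X1} and verify the self-similarity equation $Y = \psi_1(Y) \cup \psi_2(Y)$; the uniqueness part of Hutchinson's theorem then forces $Y = X_{1,\alpha,\theta}$ once I also check that $Y$ is non-empty and compact. Nonemptiness is immediate, since the sequence $(\delta_1,\delta_2,\ldots)=(1,0,0,\ldots)$ lies in $W_\theta$ and yields $\alpha\in Y$.

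For the self-similarity equation, I would argue both inclusions by direct manipulation of the series $\sum\delta_n\alpha^n$. In the forward direction, starting from $(\delta_n)\in W_\theta$ with leading nonzero term $1$, applying $\psi_1$ prepends a zero to the sequence, while applying $\psi_2$ produces the new sequence $(1,e^{i\theta}\delta_1,e^{i\theta}\delta_2,\ldots)$; in each case I check that the result is again a revolving sequence whose leading nonzero term is $1$. The only nontrivial point is for $\psi_2$: the revolving property of $(\delta_n)$ forces $e^{i\theta}\delta_{j_1}=e^{i\theta}$, which is exactly the value that must follow $1$ in the cycle, and multiplication by $e^{i\theta}$ afterwards simply rotates the starting point of the cycle while preserving its order. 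The reverse inclusion is the mirror image: if $\delta_1=0$ then $y=\psi_1(y')$ for the $y'$ obtained by dropping the leading zero, and if $\delta_1=1$ then solving $\psi_2(y')=y$ yields the sequence $(e^{-i\theta}\delta_2,e^{-i\theta}\delta_3,\ldots)$; since $\delta_1=1$ forces the next nonzero entry of $(\delta_n)$ to be $e^{i\theta}$, the multiplication by $e^{-i\theta}$ restores a leading value of $1$, and the cyclic order is again preserved.

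The main obstacle I anticipate is closedness of $Y$, which is what compactness really requires (boundedness by $|\alpha|/(1-|\alpha|)$ is trivial from $|\delta_n|\leq 1$). The subtlety is that the literal condition $\delta_{j_1}=1$ fails for the all-zero sequence, yet this sequence arises as a limit of admissible ones: for instance, the sequences $(0,\ldots,0,1,0,\ldots)$ with the $1$ pushed to position $k\to\infty$ contribute points $\alpha^k\to 0$. I would resolve this by interpreting the condition as vacuously satisfied for the all-zero sequence, or equivalently by closing the right-hand side of \eqref{eq:X1}; the point $0$, as the fixed point of $\psi_1$, is automatically in $X_{1,\alpha,\theta}$. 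With this convention $Y$ is the continuous image of a closed subset of the compact product space $W_\theta$ under the evaluation map $(\delta_n)\mapsto\sum\delta_n\alpha^n$, whose uniform convergence follows from $|\delta_n|\leq 1$ and $|\alpha|<1$, and is therefore compact. The conclusion $Y=X_{1,\alpha,\theta}$ then follows from Hutchinson's uniqueness theorem.
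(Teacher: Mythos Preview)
Your proof is correct. Note, however, that the present paper does not itself prove this theorem: it is quoted from \cite{Kawamura-Allen-2020} and stated without proof in the introduction. Your verification of the set equation $Y=\psi_1(Y)\cup\psi_2(Y)$ follows exactly the pattern the paper uses for the analogous ternary result (Theorem~\ref{theorem:set equation}), splitting into cases according to the value of $\delta_1$ and shifting or rotating the tail sequence accordingly. Your compactness argument---realising $Y$ (with the all-zero sequence adjoined) as the continuous image of a closed subset of the compact product space $\Delta_\theta^{\NN}$---also appears to match the original Kawamura--Allen technique: the closing remark of Section~3 explains that their closedness proof fails for $T_{1,\alpha,\beta,\theta}$ precisely because the auxiliary binary sequence there is \emph{future dependent}, which is exactly the obstruction to the prefix-determined symbol-space argument you sketch.
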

Notice that each generalized revolving sequence  determines a complex number and $X_{1,\alpha,\theta}$ is a set of points in the complex plane. 
In particular, if $\alpha=(1-i)/2$ and $\theta=-\pi/2$, $X_{1,\alpha,\theta}$ is a Tiling dragon~\cite{Ito-1987}. If $\alpha=(1-i)/2$ and $\theta=\pi/2$, then $X_{1,\alpha,\theta}$ is L\'evy's dragon curve, which is a continuous curve but with positive area~\cite{Kawamura-2002}.

\begin{figure}
\includegraphics[height=4cm,width=4cm, bb=0 0 1000 1000]{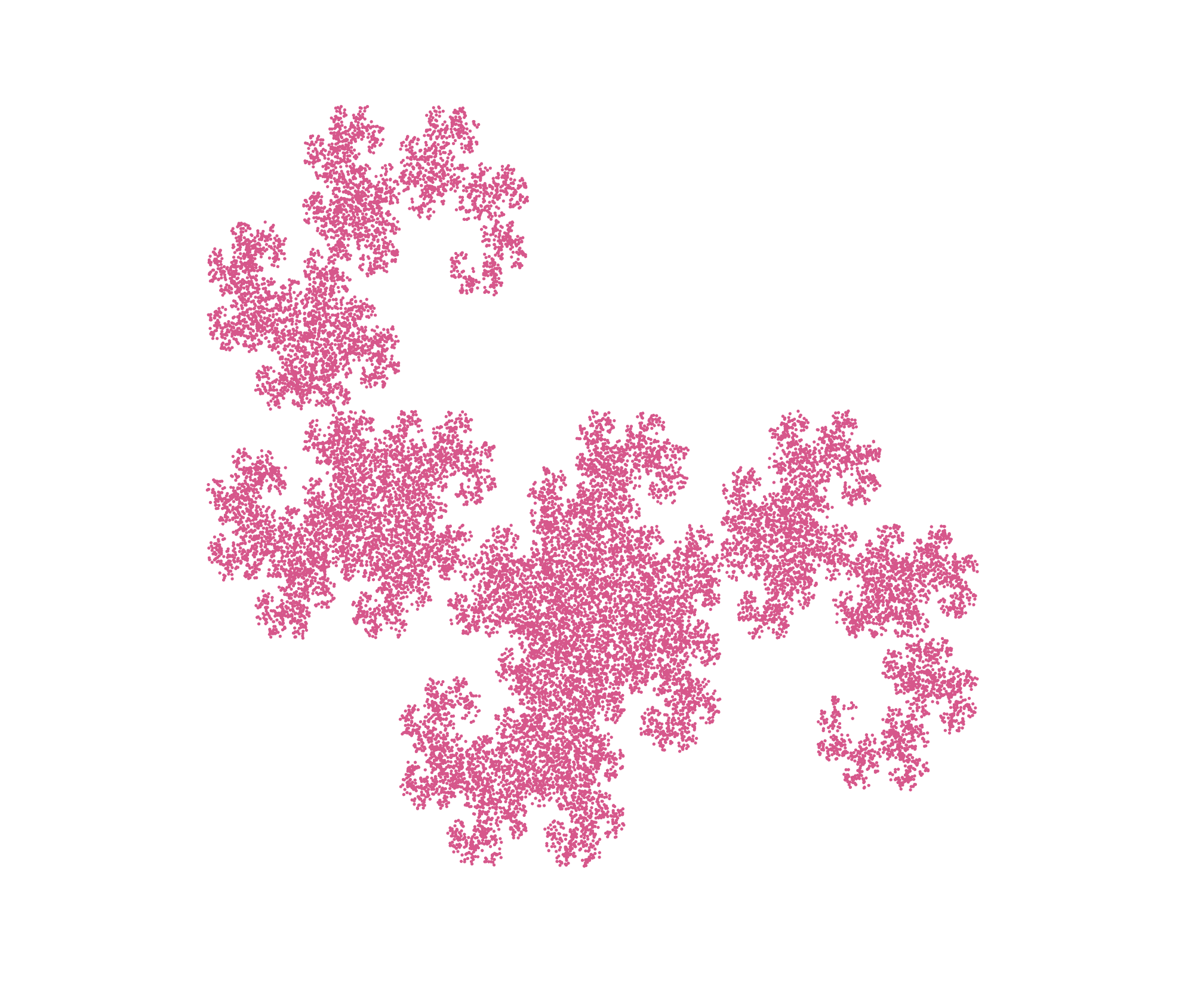}
\qquad \qquad \qquad
\includegraphics[height=4cm,width=4cm, bb=0 0 1000 1000]{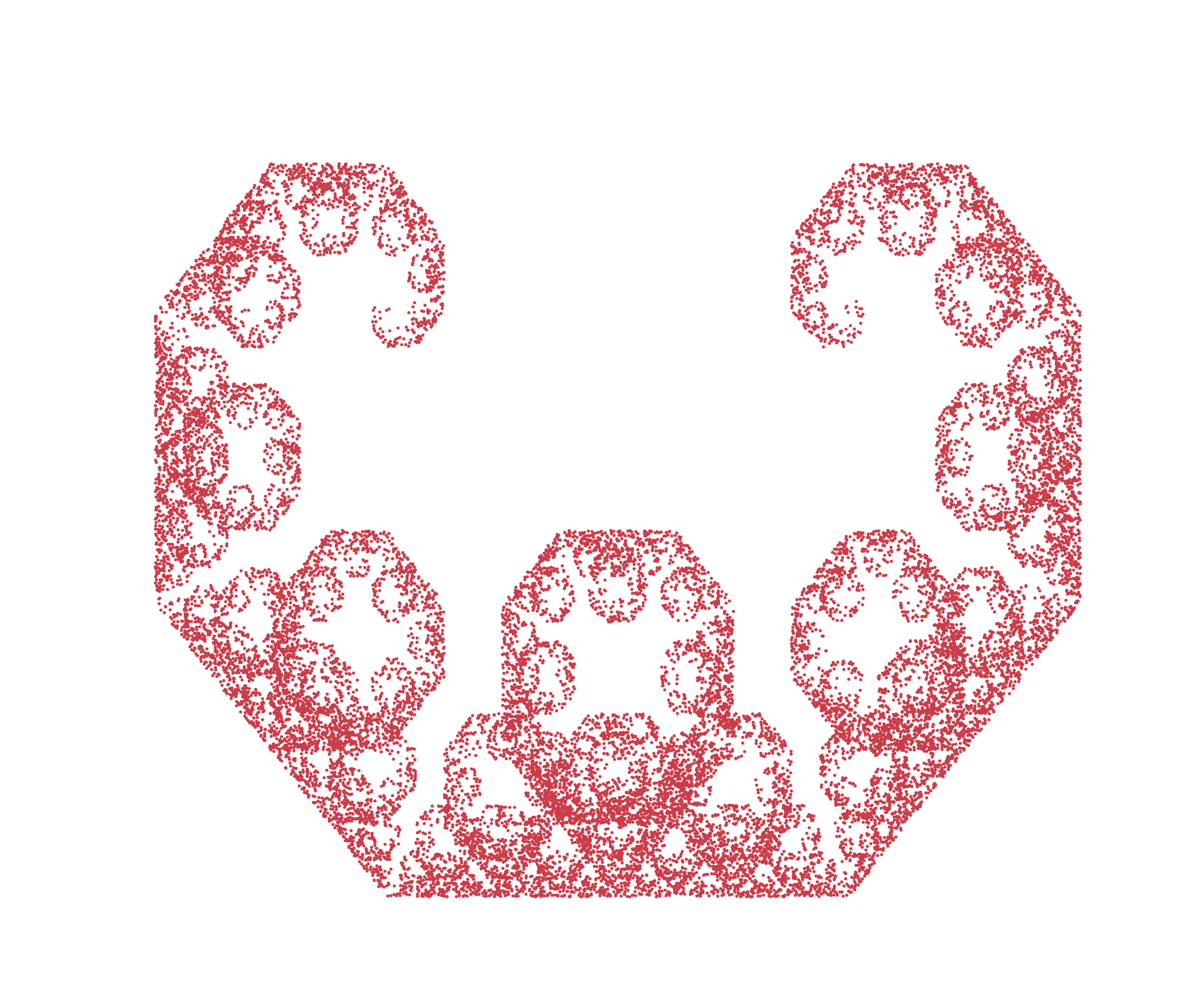}
\caption{$X_{1,\alpha, \beta, \theta}:\alpha=\frac12-\frac{i}{2}, \theta=-\pi/2$ (left) and $\theta=\pi/2 $ (right).} 
\label{figure0} 
\end{figure}

\medskip
In addition to these two dragons, there are three other famous dragons: the Heighway dragon, the Twindragon and Terdragon. (See \cite{Edgar-1990} for more details). Motivated by these three dragons, in this paper we build on the work of Kawamura and Allen to include a wider collection of self-similar sets. 

Section 1 gives the parametric expression of certain self-similar sets, including the Heighway dragon and the Twindragon, generated by the IFS: 
\begin{equation*}
\begin{cases}
 \phi_1(z)=\alpha z,& \\
 \phi_2(z)=(\alpha e^{i \theta}) z+1.
\end{cases}
\end{equation*}
Interestingly, the parametric expression is given by only changing the initial index in the sum from $n=1$ to $n=0$ in \eqref{eq:X1}. This result suggests a natural question: Does changing the constant term in the IFS from $\alpha$ to $1$ always generate a geometrically similar attractor? Is it always true that this small change does not influence the parametrized expression for certain self-similar sets? 
These questions are discussed in Section 2. 

In section 3, we finally study the Terdragon, a more challenging self-similar set generated by the following set of three contractions:
\begin{equation*}
\begin{cases}
 \phi_1(z)=\alpha z,& \\
 \phi_2(z)=(\alpha e^{i \theta}) z+\alpha, & \\
 \phi_3(z)=\alpha z + \beta,
\end{cases}
\end{equation*}
where $\alpha=1/2-(\sqrt{3}/6)i, \beta=1/2+(\sqrt{3}/6)i$ and $\theta=2\pi/3$. We introduce a new type of revolving sequence which parametrizes this different family of self-similar sets. 

\section{The Heighway dragon and Twindragon}

Before stating our results, some notation needs to be introduced. Let $\alpha \in \CC$ denote a complex parameter satisfying $|\alpha|<1$. Let $\theta$ be an angle with $-\pi < \theta \leq \pi$ and a rational multiple of $2 \pi$; that is $|\theta|=(2 \pi q)/p$ where $p \in \NN, q \in \NN_0$. Define 
$$\Delta_{\theta}:= \{0, 1, e^{i \theta}, e^{2i \theta}, \cdots e^{(p-1)i \theta}\}.$$

\begin{definition}
A sequence $(\delta_{1},\delta_{2},\dots)\in \{0, 1, e^{i \theta}, e^{2i \theta}, \cdots, e^{(p-1)i \theta}\}^{\NN}$ satisfies the 
{\it Generalized Revolving Condition (GRC)} if the subsequence obtained after the removal of its zero elements is a (finite or infinite) truncation of the sequence $(e^{n i \theta})$. More precisely, let $(n_i)$ be the sequence of indices $n$ such that $\delta_n \neq 0$, listed in increasing order. Then $\delta_{n_{i+1}}=e^{i \theta} \delta_{n_i}$ for each $i$. 
\end{definition}

The Heighway dragon was first discovered by a physicist J.~Heighway. M.~Gardner described that the Heighway dragon is the shape that is generated from repeatedly folding a strip of paper in half~\cite{Gardner-1967}. Many of its properties were first investigated by Davis and Knuth. Motivated by the Heighway dragon, Davis and Knuth also constructed another dragon, called the Twindragon~\cite{Davis+Knuth-1970}.
\begin{figure}
\includegraphics[height=4cm,width=4cm, bb=0 0 1000 1000]{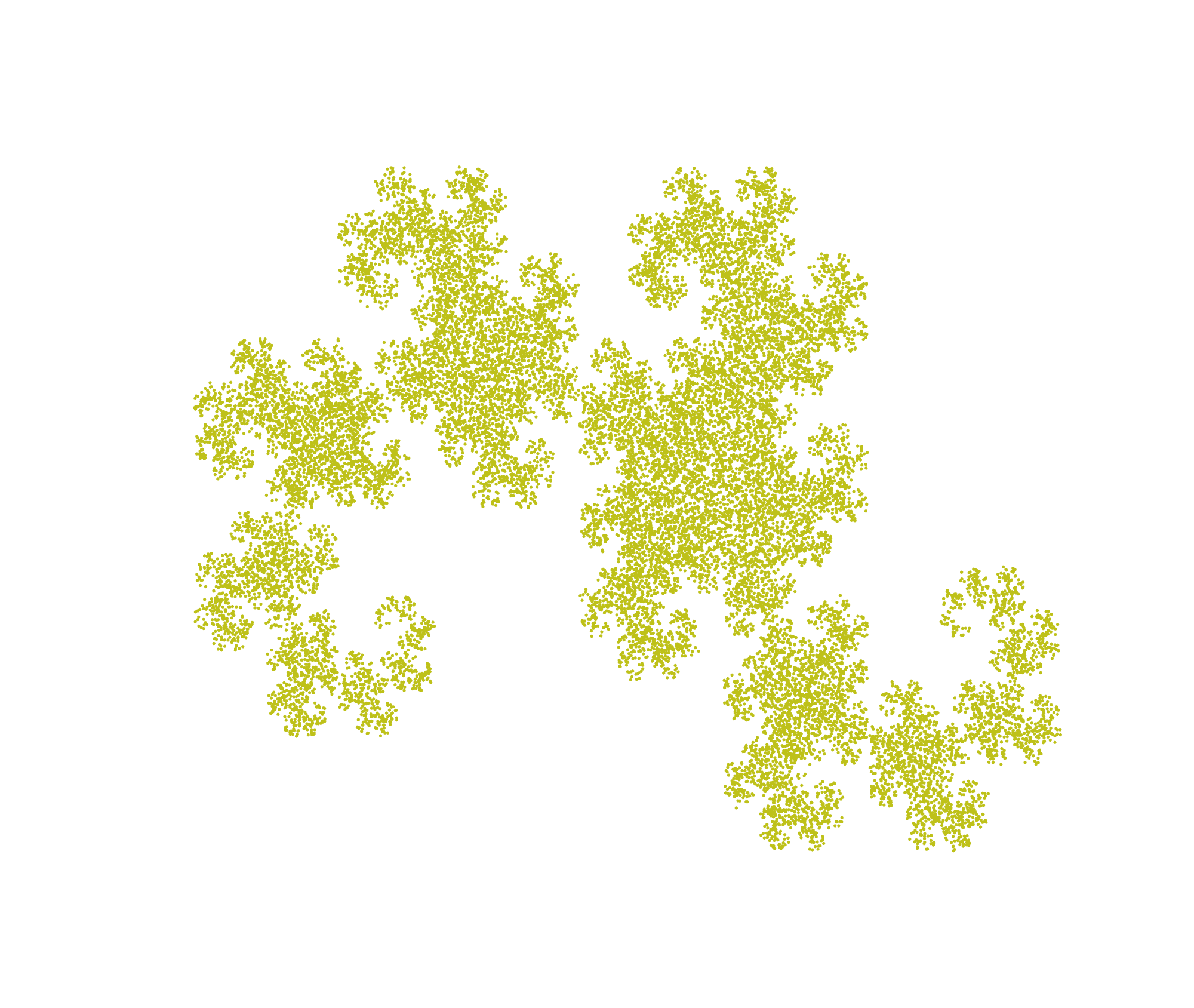}
\qquad \qquad \qquad
\includegraphics[height=4cm,width=4cm, bb=0 0 1000 1000]{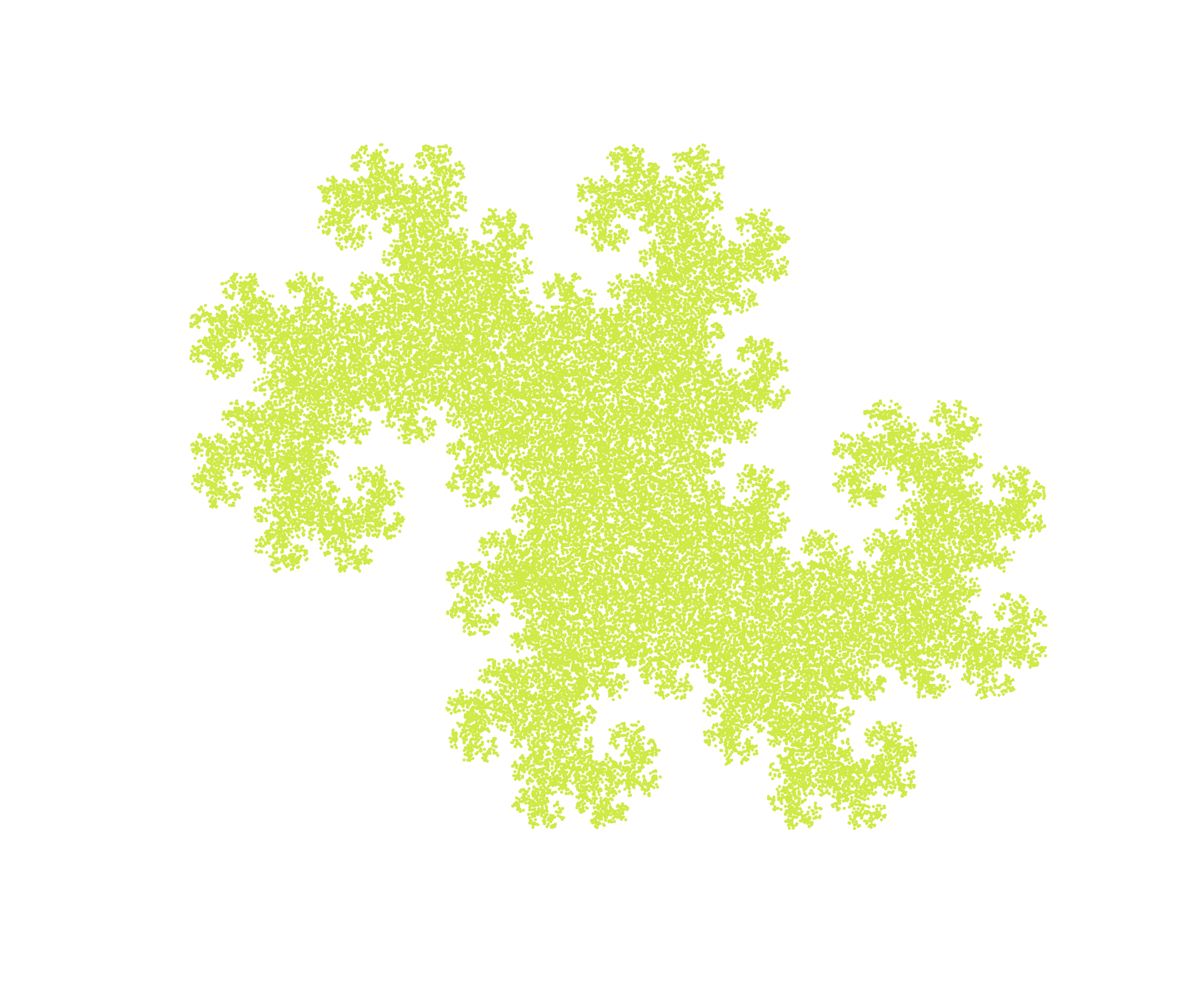}
\caption{The Heighway dragon and the Twindragon}
\label{figure1}
\end{figure}

The Heighway dragon and the Twindragon are both self-similar sets, not generated by \eqref{eq:IFS1}, but by a slightly different pair of two similar contractions:
\begin{equation}
\label{eq:IFS2} 
\begin{cases}
 \phi_1(z)=\alpha z,& \\
 \phi_2(z)=(\alpha e^{i \theta}) z+1.
\end{cases}
\end{equation}
In particular, if $\alpha=(1+i)/2$ and $\theta=\pi/2$, then \eqref{eq:IFS2} generates the Heighway dragon. If $\alpha=(1+i)/2$ and $\theta=\pi$, then \eqref{eq:IFS2} generates the Twindragon~\cite{Davis+Knuth-1970}. 
\medskip

The following questions arise naturally. How can we give a parametrized expression for self-similar sets generated by the IFS \eqref{eq:IFS2}? Does this small change from $\alpha$ to $1$ give any influence to the relationship with generalized revolving sequences? 

\medskip

Let $H_{1,\alpha,\theta}$ be the self-similar set generated by the IFS \eqref{eq:IFS2}. Notice that $\psi_1(z)=\alpha \phi_1(z/\alpha)$ and $\psi_2(z)=\alpha \phi_2(z/\alpha)$ in \eqref{eq:IFS1} and \eqref{eq:IFS2}. Thus, 
\begin{equation*}
X_{1,\alpha,\theta}=\psi_1(X_{1,\alpha,\theta}) \cup \psi_2(X_{1,\alpha,\theta})
                    =\alpha \phi_1 \left(\frac{X_{1,\alpha,\theta}}{\alpha}\right) \cup \alpha \phi_2 \left(\frac{X_{1,\alpha,\theta}}{\alpha}\right).
\end{equation*}
Therefore, 
\begin{equation*}
\frac{X_{1,\alpha,\theta}}{\alpha}=\phi_1 \left(\frac{X_{1,\alpha,\theta}}{\alpha}\right) \cup \phi_2 \left(\frac{X_{1,\alpha,\theta}}{\alpha}\right).
\end{equation*}

Since $H_{1,\alpha, \theta}$ is a unique non-empty compact solution of the IFS \eqref{eq:IFS2} and $X_{1,\alpha,\theta}$ is a closed set, we have 
$$H_{1,\alpha, \theta}=X_{1,\alpha,\theta}/\alpha.$$ 

Therefore, the following proposition follows immediately from \eqref{eq:X1}.

\begin{proposition}
\label{prop:toby1} 
Let $H_{1,\alpha,\theta}$ be the self-similar set generated by the IFS: 
\begin{equation*}
\begin{cases}
 \phi_1(z)=\alpha z,& \\
 \phi_2(z)=(\alpha e^{i \theta}) z+1.
\end{cases}
\end{equation*}

Then $H_{1,\alpha,\theta}$ has the following parametrized expression. 
\begin{equation}
  H_{1,\alpha,\theta}=\left\{ \sum_{n=0}^{\infty} \delta_{n}\alpha^{n}: \delta_{j_1}=1, (\delta_{0},\delta_{1},\dots )\in W_{\theta} \right\}
	=X_{1,\alpha,\theta}/\alpha,
\end{equation}
where $j_{1}:=\min\{j: \delta_{j} \not= 0\}$. 
\end{proposition}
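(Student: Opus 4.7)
The plan is to exploit the explicit conjugacy between the two IFSs that is already indicated in the paragraph preceding the statement, and then to re-index the series appearing in the Kawamura-Allen theorem.

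First, I would verify the conjugacy relations $\psi_i(z) = \alpha\, \phi_i(z/\alpha)$ for $i = 1, 2$ by direct substitution into \eqref{eq:IFS1} and \eqref{eq:IFS2}. Applying these identities to the fixed-point equation $X_{1,\alpha,\theta} = \psi_1(X_{1,\alpha,\theta}) \cup \psi_2(X_{1,\alpha,\theta})$ and dividing both sides by $\alpha$ shows that the non-empty compact set $X_{1,\alpha,\theta}/\alpha$ is itself a solution of the set equation associated with \eqref{eq:IFS2}. By Hutchinson's uniqueness theorem, $H_{1,\alpha,\theta} = X_{1,\alpha,\theta}/\alpha$, which gives the second equality in the proposition.

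For the parametric expression, I would start from \eqref{eq:X1}, divide by $\alpha$, and obtain
\[
\frac{X_{1,\alpha,\theta}}{\alpha} = \left\{ \sum_{n=1}^{\infty} \delta_n \alpha^{n-1} : \delta_{j_1} = 1, \ (\delta_1, \delta_2, \dots) \in W_{\theta} \right\}.
\]
A change of index $m = n-1$, together with the relabeling $\tilde{\delta}_m := \delta_{m+1}$, converts this into a sum over $m \geq 0$. Since the Generalized Revolving Condition only constrains the subsequence of non-zero entries in their order of appearance, it is invariant under this shift of the index set; hence $(\tilde{\delta}_0, \tilde{\delta}_1, \dots) \in W_{\theta}$ if and only if $(\delta_1, \delta_2, \dots) \in W_{\theta}$, and the condition that the first non-zero entry equal $1$ transfers verbatim.

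There is no substantive obstacle here. The only point that requires a moment of care is the shift-invariance of $W_{\theta}$ under re-indexing, which is immediate from the formulation of the GRC in terms of the sequence of non-zero values; together with the observation that $X_{1,\alpha,\theta}/\alpha$ is non-empty and compact, which follows from $X_{1,\alpha,\theta}$ being a Hutchinson attractor and $\alpha \neq 0$.
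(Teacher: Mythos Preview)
Your proposal is correct and follows essentially the same argument as the paper: the text immediately preceding the proposition establishes the conjugacy $\psi_i(z)=\alpha\,\phi_i(z/\alpha)$, deduces that $X_{1,\alpha,\theta}/\alpha$ solves the set equation for $\{\phi_1,\phi_2\}$, and then invokes uniqueness (noting $X_{1,\alpha,\theta}$ is closed) to conclude $H_{1,\alpha,\theta}=X_{1,\alpha,\theta}/\alpha$, from which the parametrized expression is said to follow immediately from \eqref{eq:X1}. Your explicit re-indexing step and remark on shift-invariance of the GRC simply spell out what the paper leaves implicit.
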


\begin{remark}
Define
\begin{align*}
H_{\alpha,\theta}:&=\left\{ \sum_{n=0}^{\infty} \delta_{n}\alpha^{n}: (\delta_{0}, \delta_{1},\dots )\in W_{\theta} \right\}, \\ 
X_{\alpha,\theta}:&=\left\{ \sum_{n=1}^{\infty} \delta_{n}\alpha^{n}: (\delta_{1}, \delta_{2},\dots )\in W_{\theta} \right\}.
\end{align*}
Notice that 
\begin{equation*}
H_{\alpha,\theta}=\bigcup_{l=0}^{p-1}(e^{i\theta})^{l}H_{1,\alpha,\theta}, \qquad 
X_{\alpha,\theta}=\bigcup_{l=0}^{p-1}(e^{i\theta})^{l}X_{1,\alpha,\theta}.
\end{equation*}

Figure \ref{figure2} also confirms that two slightly different pairs IFSs can generate a geometrically similar attractor. It is interesting to note that the only difference in the parametric expressions of $H_{\alpha,\theta}$ and $X_{\alpha,\theta}$ is whether the sum begins from $n=0$ or $n=1$. 
\end{remark}

\begin{figure}
\includegraphics[height=4.5cm,width=4.5cm, bb=0 0 1000 1000]{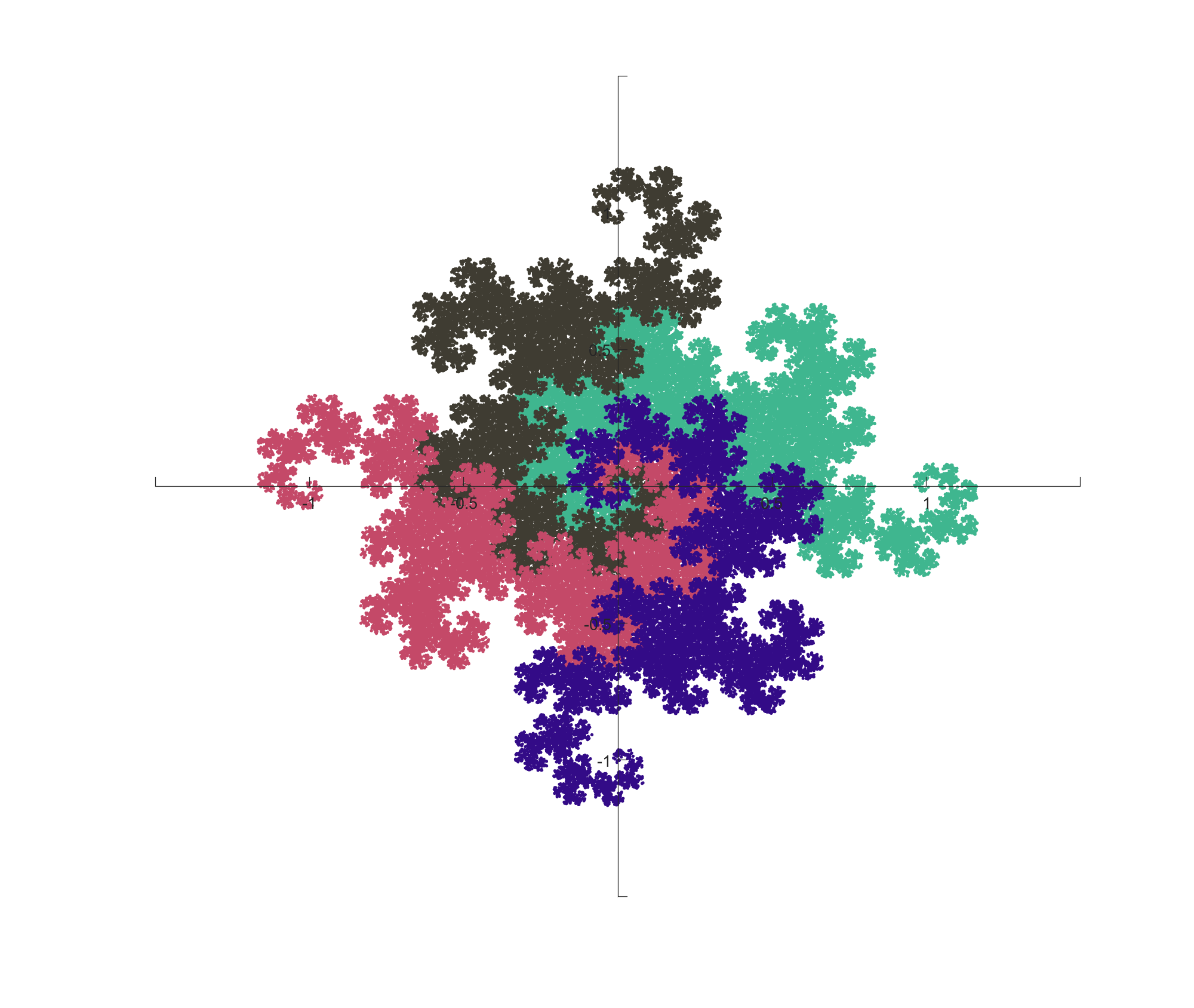}
\qquad \qquad \qquad 
\includegraphics[height=4.5cm,width=4.5cm, bb=0 0 1000 1000]{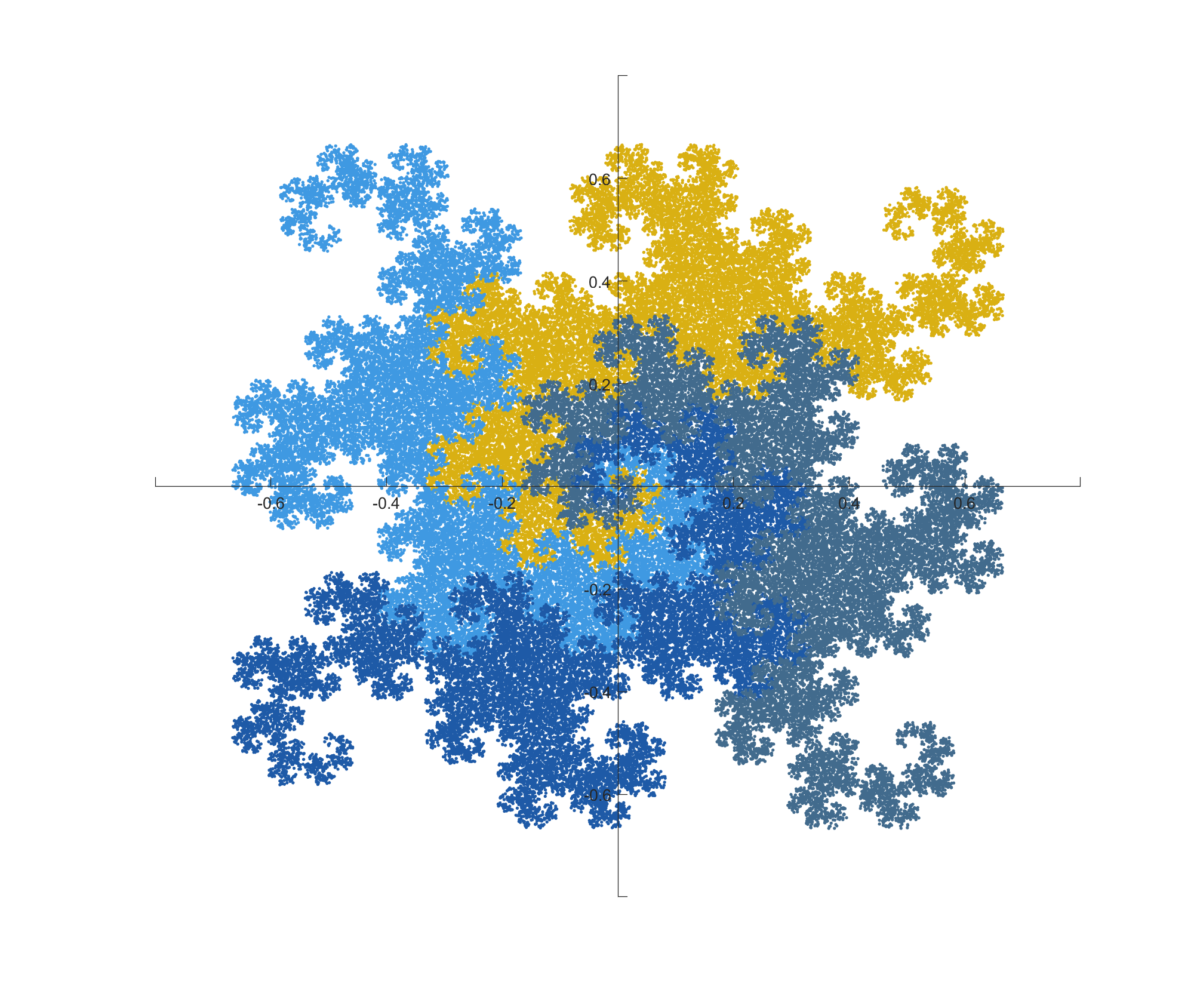} 
\caption{$H_{\alpha,\theta}$ (left) and $X_{\alpha,\theta}$ (right): $(\alpha, \theta)=((1+i)/2, \pi/2)$}
\label{figure2}
\end{figure}

\section{Structure of attractor and signed revolving sequences}

Proposition~\ref{prop:toby1} suggests a natural question: Does a small change in the constant term of the second map from $\alpha$ to $1$ always generate a geometrically similar attractor? Is it always true that this small change does not give any major influence to the parametrized expression for certain self-similar sets? 

\medskip

To begin, we recall another result from~\cite{Kawamura-Allen-2020}. 

\begin{definition} \label{def:SRC}
A sequence ${(\delta_{1},\delta_{2},\dots)\in\Delta_{\theta}^{\NN}}$ satisfies the {\em Signed Revolving Condition (SRC)} if  
\begin{enumerate}
\item $\delta_1$ is free to choose, 
\item If $\delta_{1}=\delta_{2}=\dots =\delta_{k}=0$, then  $\delta_{k+1}$ is free to choose,
\item Otherwise, $\delta_{k+1}=0$ or \begin{align*}
\delta_{k+1}=\begin{cases} 	
(e^{+i\theta})\delta_{j_{0}(k)}, &\mbox{if $j_0(k)$ is odd},\\ 
(e^{-i\theta})\delta_{j_{0}(k)}, &\mbox{if $j_0(k)$ is even}, 
\end{cases}
\end{align*}
where $j_{0}(k):=\max \{j \le k : \delta_{j} \not= 0 \}$. 
\end{enumerate} 
\end{definition}

Compared to the generalized revolving sequences, which always move in the same direction, we see that the direction of movement of the sequence $(\delta_n)$ depends on its past. Define $W^{\pm}_{\theta}$ as the set of all signed revolving sequences with parameter $\theta$. 

\begin{theorem}[Kawamura-Allen]
\label{th:main2}
Let $X^{2}_{1,\alpha,\theta}$ be the self-similar set generated by the iterated function system: 
\begin{equation}
\label{eq:second case}
 \begin{cases}
 \psi_1(z)=\alpha \overline{z}, &\\
 \psi_2(z)=(\alpha e^{i \theta}) \overline{z}+\alpha, 
 \end{cases}
\end{equation}
where $\alpha \in \CC$ such that $|\alpha|<1$.

Then $X_{1,\alpha,\theta}^{2}$ has the following parametrized expression:
\begin{equation}
\label{eq:X2}
  X_{1,\alpha,\theta}^{2}=\left\{ \sum_{n=1}^{\infty} \delta_{n} \prod_{j=1}^{n}\eta_j 
: \delta_{j_1}=1, (\delta_{1}, \delta_{2},\dots )\in W^{\pm}_{\theta} \right\}, 
\end{equation}
where $j_{1}:=\min\{j: \delta_{j} \not= 0\}$ and 
\begin{equation*}
\eta_j=
\begin{cases}
\alpha, & \mbox{if $j$ is odd}, \\
\overline{\alpha}, & \mbox{if $j$ is even}.
\end{cases}
\end{equation*}
\end{theorem}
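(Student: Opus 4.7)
The plan is to let $Y$ denote the set on the right-hand side of \eqref{eq:X2} and to prove $Y=X_{1,\alpha,\theta}^{2}$ via Hutchinson's uniqueness theorem: it suffices to show $Y$ is a non-empty compact subset of $\CC$ satisfying $Y=\psi_1(Y)\cup\psi_2(Y)$. Compactness is routine, since $|\eta_j|=|\alpha|<1$ and $|\delta_n|\le 1$ force uniform convergence of the defining series on $\Delta_\theta^{\NN}$, and SRC cuts out a closed subset of the compact product space $\Delta_\theta^{\NN}$; the normalization $\delta_{j_1}=1$ merely selects a canonical representative under the natural rotational symmetry.

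The crucial preliminary observation is the identity $\overline{\eta_j}=\eta_{j+1}$, i.e.\ conjugation of the $\eta$-sequence is exactly a parity shift. For $s=\sum_{n=1}^\infty\delta_n\prod_{j=1}^n\eta_j\in Y$, this yields
\[
\alpha\overline{s}=\eta_1\sum_{n=1}^\infty\overline{\delta_n}\prod_{j=2}^{n+1}\eta_j=\sum_{m=2}^\infty\overline{\delta_{m-1}}\prod_{j=1}^m\eta_j,
\]
so $\psi_1(s)$ has the required form with coefficients $\delta'_1=0$ and $\delta'_m=\overline{\delta_{m-1}}$ for $m\ge 2$, while $\psi_2(s)=\alpha e^{i\theta}\overline{s}+\alpha$ has coefficients $\delta''_1=1$ and $\delta''_m=e^{i\theta}\overline{\delta_{m-1}}$ for $m\ge 2$. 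Establishing $\psi_1(Y)\cup\psi_2(Y)\subseteq Y$ then reduces to verifying that $(\delta'_m)$ and $(\delta''_m)$ lie in $W^\pm_\theta$ with the normalization preserved.

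This SRC check is the main obstacle, and it is where the parity-dependent signs of Definition~\ref{def:SRC} earn their keep. Complex conjugation sends $e^{\pm i\theta}\mapsto e^{\mp i\theta}$, so it reverses the direction of revolving; meanwhile, the index shift $n\mapsto n+1$ flips the parity of $j_0(k)$ and hence flips which of $e^{\pm i\theta}$ SRC demands at each step. The two flips cancel exactly, which is the whole purpose of signing the revolving rule by parity. A brief computation comparing consecutive non-zero indices $n_i<n_{i+1}$ of $(\delta_n)$ with those of $(\delta'_m)$ or $(\delta''_m)$ confirms this cancellation; the normalization $\overline{\delta_{j_1}}=1$ propagates through for $\psi_1$, and the constant term $\alpha=\eta_1$ forces $\delta''_1=1$ for $\psi_2$.

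The reverse inclusion $Y\subseteq\psi_1(Y)\cup\psi_2(Y)$ is obtained by running the same bookkeeping backwards: any $t\in Y$ with representing sequence $(\delta_m)\in W^\pm_\theta$ either has $\delta_1=0$, in which case shifting indices down and conjugating produces a preimage $t'\in Y$ with $\psi_1(t')=t$, or has $\delta_1=1$ by normalization, in which case peeling off the constant $\alpha$, dividing by $e^{i\theta}$, and then shifting and conjugating produces $t''\in Y$ with $\psi_2(t'')=t$; the SRC survives these inverse operations by the same parity-versus-sign cancellation. Combining the two inclusions yields the Hutchinson fixed-point equation for $Y$, and uniqueness of the attractor gives the desired identity $Y=X_{1,\alpha,\theta}^{2}$.
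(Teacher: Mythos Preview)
The paper does not give a proof of this theorem: it is quoted as a result of Kawamura and Allen from~\cite{Kawamura-Allen-2020}, so there is no in-paper argument to compare against directly. That said, your strategy---show that the candidate set $Y$ is nonempty, compact, and satisfies $Y=\psi_1(Y)\cup\psi_2(Y)$, then invoke Hutchinson uniqueness---is exactly the template the present paper uses for its own main result (Theorem~\ref{theorem:set equation}) on the Terdragon, and it is the natural approach here as well.

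Your algebra is correct. The key identity $\overline{\eta_j}=\eta_{j+1}$ is precisely what makes the index shift by one interact correctly with complex conjugation, and your observation that this parity flip exactly compensates the sign flip in Definition~\ref{def:SRC} is the heart of the matter; the case analysis for both inclusions checks out, including the transition from the forced term $\delta''_1=1$ to the next nonzero term in the $\psi_2$ case (where $j_0=1$ is odd, so SRC demands $e^{+i\theta}$, matching $e^{i\theta}\overline{1}$).

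One remark on the compactness paragraph: your phrase ``the normalization $\delta_{j_1}=1$ merely selects a canonical representative under the natural rotational symmetry'' undersells what is needed. The normalized subset $\{(\delta_n)\in W^\pm_\theta:\delta_{j_1}=1\text{ or }(\delta_n)\equiv 0\}$ is genuinely a proper closed subset of $W^\pm_\theta$, not just a choice of representatives, and you do need it closed (which it is, since in the product topology any limit sequence agrees with approximants on an initial segment containing the first nonzero entry). With that small clarification your compactness argument is fine, and the proof goes through.
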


\begin{remark}
Define
\begin{align*}
H_{\alpha,\theta}^{2}:&=\left\{ \sum_{n=0}^{\infty} \delta_{n} \prod_{j=1}^{n}\eta_j 
: (\delta_{0}, \delta_{1},\dots )\in W^{\pm}_{\theta} \right\}, \\
X_{\alpha,\theta}^{2}:&=\left\{ \sum_{n=1}^{\infty} \delta_{n} \prod_{j=1}^{n}\eta_j 
: (\delta_{1}, \delta_{2},\dots )\in W^{\pm}_{\theta} \right\}.
\end{align*}

Surprisingly, computer simulations suggest that changing the starting point from $n=1$ to $n=0$ generates a geometrically very different attractor; see Figure~\ref{figure3}.
\end{remark}

\begin{figure}
\includegraphics[height=4.5cm,width=4.5cm, bb=0 0 1000 1000]{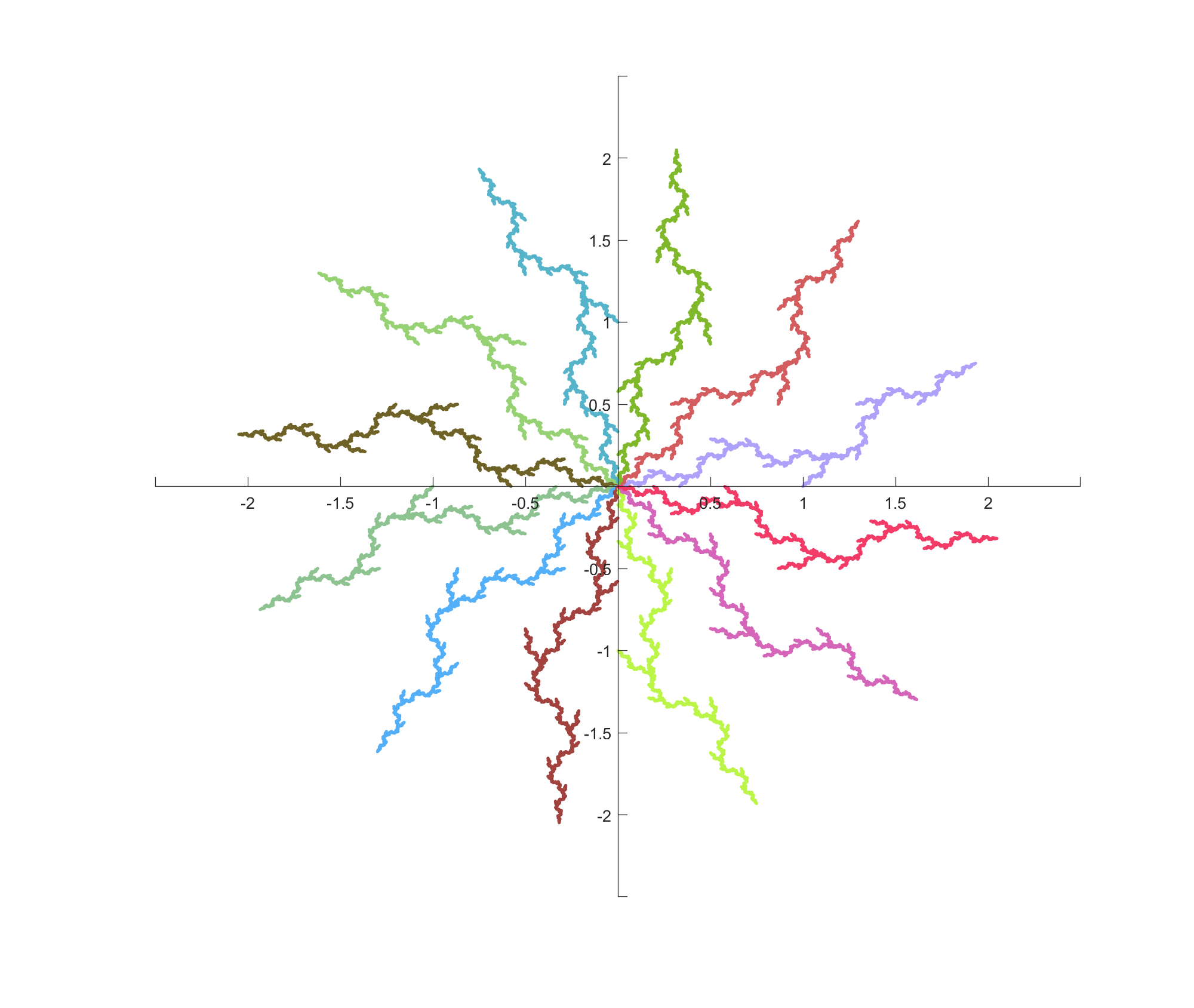}
\qquad \qquad \qquad 
\includegraphics[height=4.5cm,width=4.5cm, bb=0 0 1000 1000]{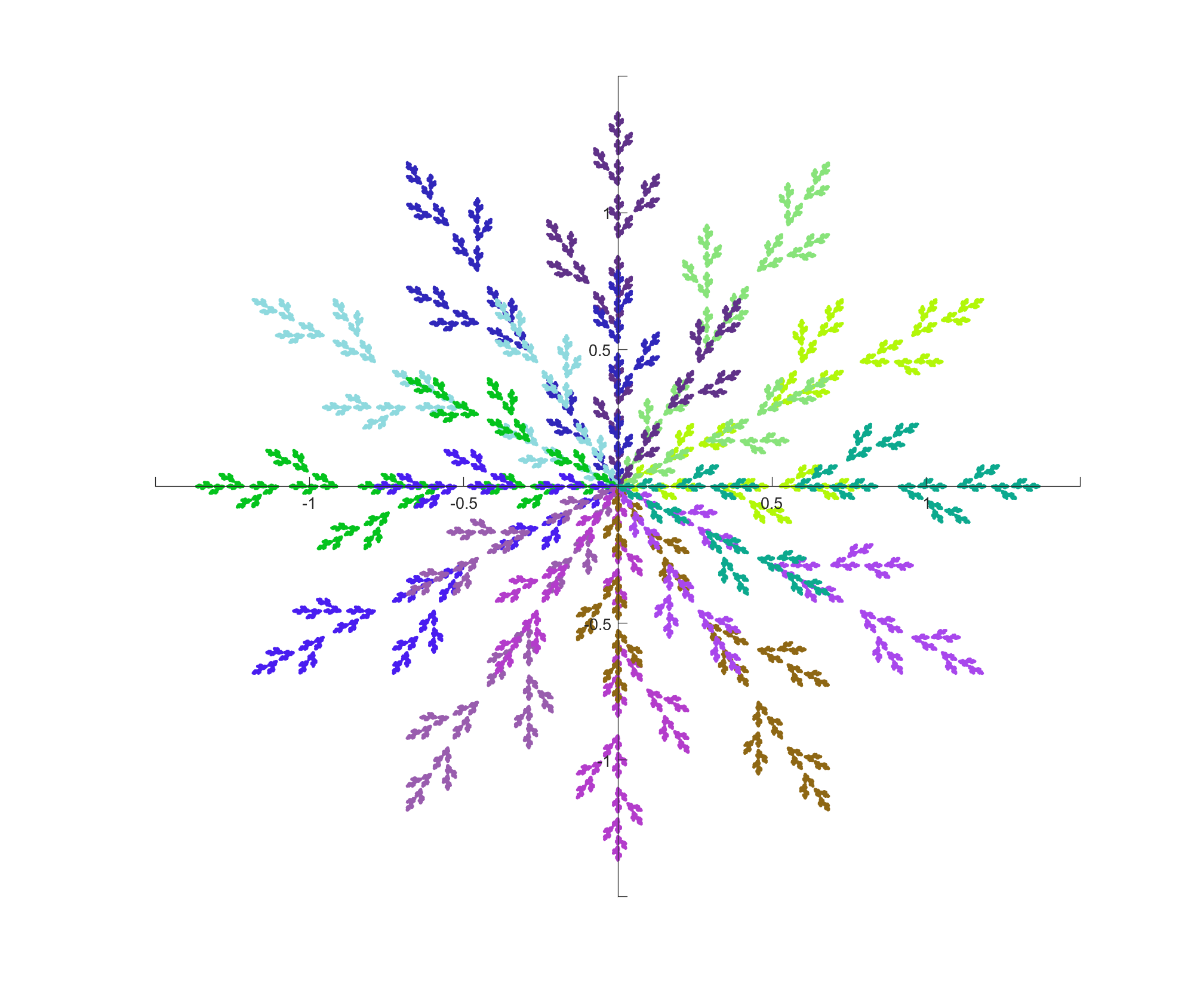}
\caption{$H^{2}_{\alpha,\theta}$ (left) and $X^{2}_{\alpha,\theta}$ (right): $(\alpha, \theta)=(\frac12+\frac{\sqrt{3}i}{6}, \frac{\pi}{6})$}
\label{figure3}
\end{figure}

Let $X_{1,\overline{\alpha},\theta}^{2}$ be the self-similar set generated by
\begin{equation}
\label{alpha-bar}
 \begin{cases}
 \psi_1(z)=\overline{\alpha} \cdot \overline{z},& \\
 \psi_2(z)=(\overline{\alpha} e^{i\theta}) \overline{z} + \overline{\alpha}.
 \end{cases}
\end{equation}

Notice that Theorem~\ref{th:main2} applies to all $\alpha \in \CC$ such that $|\alpha|<1$, so that $X_{1,\overline{\alpha},\theta}^{2}$ 
has the following parametrized expression:
\begin{equation*}
        X_{1,\overline{\alpha},\theta}^2=\left\{ \sum_{n=1}^{\infty} \delta_{n} \prod_{j=1}^{n} \overline{\eta_j}: \delta_{j_1}=1, (\delta_{1}, \delta_{2},\dots )\in W^{\pm}_{\theta} \right\}, 
\end{equation*}
where $j_{1}:=\min\{j: \delta_{j} \not= 0\}$ and $\eta_1=\alpha$ and $\eta_{j+1}=\overline{\eta_j}$ for $j=1,2, \dots$. This leads to the  following proposition:
\begin{proposition}
        Let $H_{1,\alpha,\theta}^2$ be the self-similar set generated by
            \begin{equation}
						\label{eq:H2}
            \displaystyle \begin{cases}
                \phi_1(z)=\alpha \overline{z} & \\
                \phi_2(z)=(\alpha e^{i\theta}) \overline{z} + 1.
            \end{cases}
        \end{equation}
        
        Then $H_{1,\alpha,\theta}^2$ has the following parametrized expression
        
        \begin{equation*}
            H_{1,\alpha,\theta}^2=\left\{ \sum_{n=0}^{\infty} \delta_{n} \prod_{j=1}^{n} \eta_j: \delta_{j_1}=1, (\delta_{0}, \delta_{1},\dots )\in W^{\pm}_{\theta} \right\}=X_{1,\overline{\alpha},\theta}^2 / \overline{\alpha}, 
\end{equation*}
where $j_{1}:=\min\{j: \delta_{j} \not= 0\}$ and $\eta_1=\alpha$ and $\eta_{j+1}=\overline{\eta_j}$ for $j=1,2, \dots$.
\end{proposition}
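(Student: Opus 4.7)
The plan is to carry over the rescaling argument used for Proposition~\ref{prop:toby1} to the conjugating setting, and then apply Theorem~\ref{th:main2} to the attractor $X_{1,\overline{\alpha},\theta}^2$ of the conjugate IFS \eqref{alpha-bar}. First, I would verify the key rescaling identity
\[
\phi_i(z) \;=\; \frac{1}{\overline{\alpha}}\,\psi_i(\overline{\alpha}\,z), \qquad i=1,2,
\]
where $\psi_1,\psi_2$ are the maps in \eqref{alpha-bar}. This is a direct calculation using $\overline{\overline{\alpha}\,z}=\alpha\overline{z}$ and $|\alpha|^2/\overline{\alpha}=\alpha$, which produces $\alpha\overline{z}$ for $i=1$ and $\alpha e^{i\theta}\overline{z}+1$ for $i=2$, matching \eqref{eq:H2}.

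Applying this identity to the attractor equation $X_{1,\overline{\alpha},\theta}^2=\psi_1(X_{1,\overline{\alpha},\theta}^2)\cup\psi_2(X_{1,\overline{\alpha},\theta}^2)$ and dividing by $\overline{\alpha}$, the nonempty compact set $X_{1,\overline{\alpha},\theta}^2/\overline{\alpha}$ becomes a fixed point of the Hutchinson operator for \eqref{eq:H2}. By uniqueness of the attractor, $H_{1,\alpha,\theta}^2 = X_{1,\overline{\alpha},\theta}^2/\overline{\alpha}$, which is the second equality in the statement. This is the exact analogue of the trick $\psi_i(z)=\alpha \phi_i(z/\alpha)$ already used in the derivation of Proposition~\ref{prop:toby1}.

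For the parametric form, I would substitute the formula for $X_{1,\overline{\alpha},\theta}^2$ from Theorem~\ref{th:main2} applied with parameter $\overline{\alpha}$ (so its product becomes $\prod_{j=1}^n \overline{\eta_j}$) and divide by $\overline{\alpha}$. Since $\overline{\eta_j}=\eta_{j+1}$ and the sequence $(\eta_j)$ is $2$-periodic in $j$, one obtains
\[
\frac{1}{\overline{\alpha}}\prod_{j=1}^n \overline{\eta_j} \;=\; \prod_{j=1}^{n-1}\eta_j, \qquad n\ge 1.
\]
Reindexing by $m=n-1$ and relabeling $\delta'_m:=\delta_{m+1}$ converts the series into $\sum_{m=0}^\infty \delta'_m \prod_{j=1}^m \eta_j$, with the normalization $\delta_{j_1}=1$ shifting by one index. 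The main obstacle is the bookkeeping in this last step: since the SRC in Definition~\ref{def:SRC} is sensitive to the parity of $j_0(k)$, one must verify that the expression ``$(\delta_0,\delta_1,\dots)\in W_\theta^\pm$'' is to be interpreted as applying the SRC with $\delta_0$ in the role of the position-$1$ entry, so that the relabeling $\delta'_m=\delta_{m+1}$ is a bijection between admissible sequences; under that convention both equalities in the proposition follow at once.
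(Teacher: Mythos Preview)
Your proposal is correct and follows essentially the same route as the paper: establish the conjugation identity $\psi_i(z)=\overline{\alpha}\,\phi_i(z/\overline{\alpha})$ (equivalently your $\phi_i(z)=\overline{\alpha}^{-1}\psi_i(\overline{\alpha}z)$), deduce $H_{1,\alpha,\theta}^2=X_{1,\overline{\alpha},\theta}^2/\overline{\alpha}$ by uniqueness, and then divide the series from Theorem~\ref{th:main2} termwise using $\overline{\alpha}^{-1}\prod_{j=1}^n\overline{\eta_j}=\prod_{j=1}^{n-1}\eta_j$. Your explicit remark on the parity convention for the SRC under the shift $\delta'_m=\delta_{m+1}$ is a point the paper leaves implicit; under the natural reading (that $(\delta_0,\delta_1,\dots)\in W_\theta^\pm$ means $\delta_0$ occupies the index-$1$ slot of Definition~\ref{def:SRC}) your bookkeeping goes through verbatim.
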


\begin{proof}
From \eqref{alpha-bar} and \eqref{eq:H2}, notice that $\psi_1(z)=\overline{\alpha}\phi_1(z / \overline{\alpha})$ and $\psi_2(z)=\overline{\alpha}\phi_2(z / \overline{\alpha})$.  Thus, we have 
    \begin{equation*}
        X_{1,\overline{\alpha},\theta}^2 = \psi_1(X_{1, \overline{\alpha},\theta}^2)\cup\psi_2(X_{1,\overline{\alpha},\theta}^2) 
        = \overline{\alpha}\phi_1(X_{1,\overline{\alpha},\theta}^2 / \overline{\alpha})\cup\overline{\alpha}\phi_2(X_{1,\overline{\alpha},\theta}^2/\overline{\alpha}).
    \end{equation*}
    
    Since $H_{1,\alpha,\theta}^2$ is the unique non-empty compact solution of \eqref{eq:H2} and $X_{1,\overline{\alpha},\theta}^2$ is a closed set,
    \begin{align*}
        H_{1,\alpha,\theta}^2=X_{1,\overline{\alpha},\theta}^2 / \overline{\alpha}
				&=\left\{ \sum_{n=1}^{\infty} \delta_{n} \prod_{j=2}^{n} \overline{\eta_j}: \delta_{j_1}=1, (\delta_{1}, \delta_{2},\dots )\in W^{\pm}_{\theta} \right\} \\
				&=\left\{ \sum_{n=1}^{\infty} \delta_{n} \prod_{j=1}^{n-1} \eta_j: \delta_{j_1}=1, (\delta_{1}, \delta_{2},\dots )\in W^{\pm}_{\theta} \right\}.
    \end{align*}
    \end{proof}

\begin{remark}

It is interesting to notice that changing the constant term in the IFS \eqref{eq:second case} from $\alpha$ to $1$ generates a very different attractor since $H_{1,\alpha,\theta}^2 \neq X_{1,\alpha,\theta}^2 / \alpha$ but rather $H_{1,\alpha,\theta}^2=X_{1,\overline{\alpha},\theta}^2 / \overline{\alpha}$. However, the only difference in the parametric expressions of $H_{1,\alpha,\theta}^2$ and $X_{1,\alpha,\theta}^2$ is whether the sum begins from $n=0$ or $n=1$. 
\end{remark}

\section{Terdragon and Ternary Revolving Sequences} 

L\'evy dragon curve, the Heighway dragon and the Twindragon are all generated by two similar contractions; however, the Terdragon is generated by three similar contractions:
\begin{equation}
\label{eq:IFS3}
\begin{cases}
 \phi_1(z)=\alpha z,& \\
 \phi_2(z)=(\alpha e^{i \theta}) z+\alpha, & \\
 \phi_3(z)=\alpha z + \beta.
\end{cases}
\end{equation}
In particular, if $\alpha=\frac12-\frac{\sqrt{3}i}{6}, \beta=\bar{\alpha}$ and $\theta=\frac{2\pi}{3}$, then the IFS \eqref{eq:IFS3} generates the Terdragon.
\medskip 

A question arises naturally: How can we give a parametrized expression for self-similar sets generated by the IFS \eqref{eq:IFS3}? How does the number of contractions influence the properties of generalized revolving sequences?

\begin{definition}
A sequence ${(\delta_{n})\in\Delta_{\theta}^{\NN}}$ satisfies the {\em Ternary Revolving Condition (TRC)} if 
        \begin{enumerate}
            \item $\delta_{1}$ is free to choose,
            \item If $\delta_{1}=\delta_{2}=\dots=\delta_{k}=0$ then $\delta_{k+1}$ is free to choose,
            \item Otherwise, $\delta_{k+1}=0$, $\delta_{k+1}=\delta_{j_{0}(k)}$ or $\delta_{k+1}=(e^{i\theta})\delta_{j_{0}(k)}$
				\end{enumerate}
where $j_{0}(k)=\max\{j\leq k:\delta_{j}\neq 0\}$.        
\end{definition}

Loosely speaking we can say that a ternary revolving sequence $(\delta_{n})\in\Delta_{\theta}^{\NN}$ is a sequence with nonzero terms lying on the unit circle and with subsequent terms which either:
	\begin{enumerate}
			\item Stay in place,
			\item Move forward on the unit circle by angle $\theta$,
			\item Or go to the origin.
	\end{enumerate}

Define $W_{\theta}^{3}$ to be the set of all ternary revolving sequences with parameter $\theta$:
$$W_{\theta}^{3}:=\{(\delta_{n})\in\Delta_{\theta}^{\mathbb{N}} :(\delta_{n}) \text{ satisfies the TRC}\}.$$

\begin{definition} 
    Let $\Sigma:=(\delta_{n})\in W_{\theta}^{3}$. Define the function $b_{\Sigma}:\mathbb{N}\longrightarrow\{0,1\}$ as follows: 
	\begin{equation*}
		b_{\Sigma}(n):=
			\begin{cases}
				1, \text{ if }\delta_{n}=\delta_{j_{1}(n)} \\
				0, \text{ otherwise}
			\end{cases}
	\end{equation*}
	where $j_{1}(n):=\min\{j>n:\delta_{j}\neq 0\}$. Fix $b_{\Sigma}(n)=0$, if $\delta_{j}=0$ $\forall j>n$.
	\medskip
	
	The Binary Static Sequence (BSS) for $(\delta_{n})$ is the binary sequence $(b_{n})$ such that $b_{n}=b_{\Sigma}(n)$, $\forall n \in\mathbb{N}$.
\end{definition}

For example, consider the following segment of a ternary revolving sequence $(\delta_{n})$ with parameter $\theta=\frac{\pi}{2}$. 
$$\boldsymbol{(\delta_{n}):}\: 1 \longrightarrow 0 \longrightarrow 1 \longrightarrow i \longrightarrow i \longrightarrow -1 \dots$$
Then the corresponding binary static sequence $(b_{n})$ is     
$$\boldsymbol{(b_{n}):}\: 1 \longrightarrow 0 \longrightarrow 0 \longrightarrow 1 \longrightarrow 0 \longrightarrow \text{?} \dots$$

Note that because the BSS for $(\delta_{n})$ is \textit{future dependent} we cannot know what the last shown element in $(b_{n})$ is without knowing what comes next in $(\delta_{n})$. 

Loosely speaking, $(b_{n})$ tells you when the the sequence $(\delta_{n})$ stays in place on the unit circle. 
Also note that $b_{\Sigma}(n)=0$ for all $n \in \NN$ such that $\delta_{n}=0$.

\medskip
For given $\alpha,\beta\in\mathbb{C}$ such that $|\alpha|<1$ , $|\beta|<1$ and $|\theta|=\frac{2 \pi q}{p}\leq\pi$, define $T_{\alpha,\beta,\theta}$ as follows:    
    \begin{equation*}
        T_{\alpha,\beta,\theta}:=\left\{\sum_{n=1}^{\infty}\delta_{n}\alpha^{n} (\beta / \alpha)^{b_n}: (\delta_n) \in W_{\theta}^{3}, (b_n) \text{ is the BSS for } (\delta_n) \right\}.
    \end{equation*} 

\begin{figure}
\includegraphics[height=4.5cm,width=4.5cm, bb=0 0 800 800]{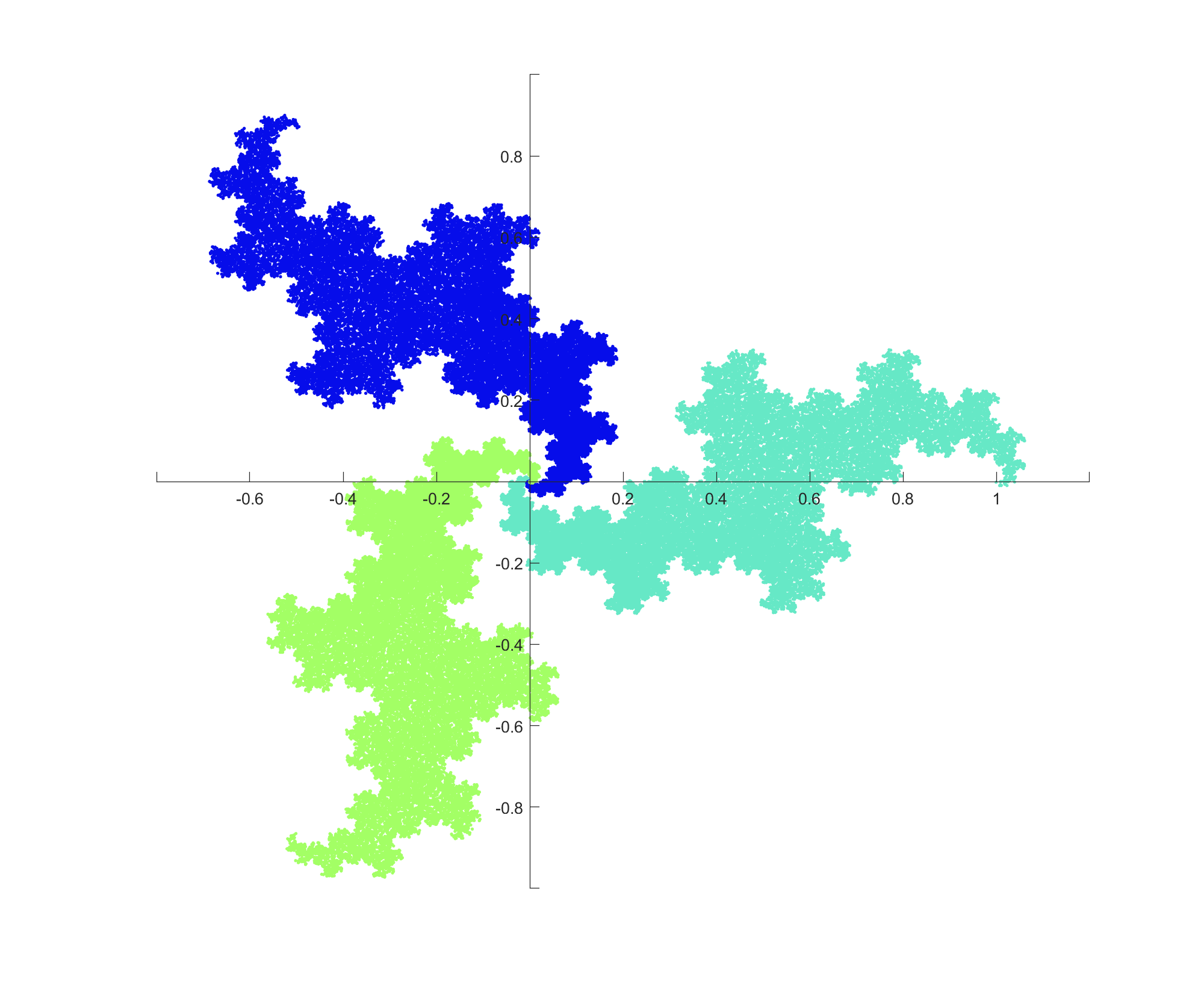}
\qquad \qquad \qquad 
\includegraphics[height=4.5cm,width=4.5cm, bb=0 0 800 800]{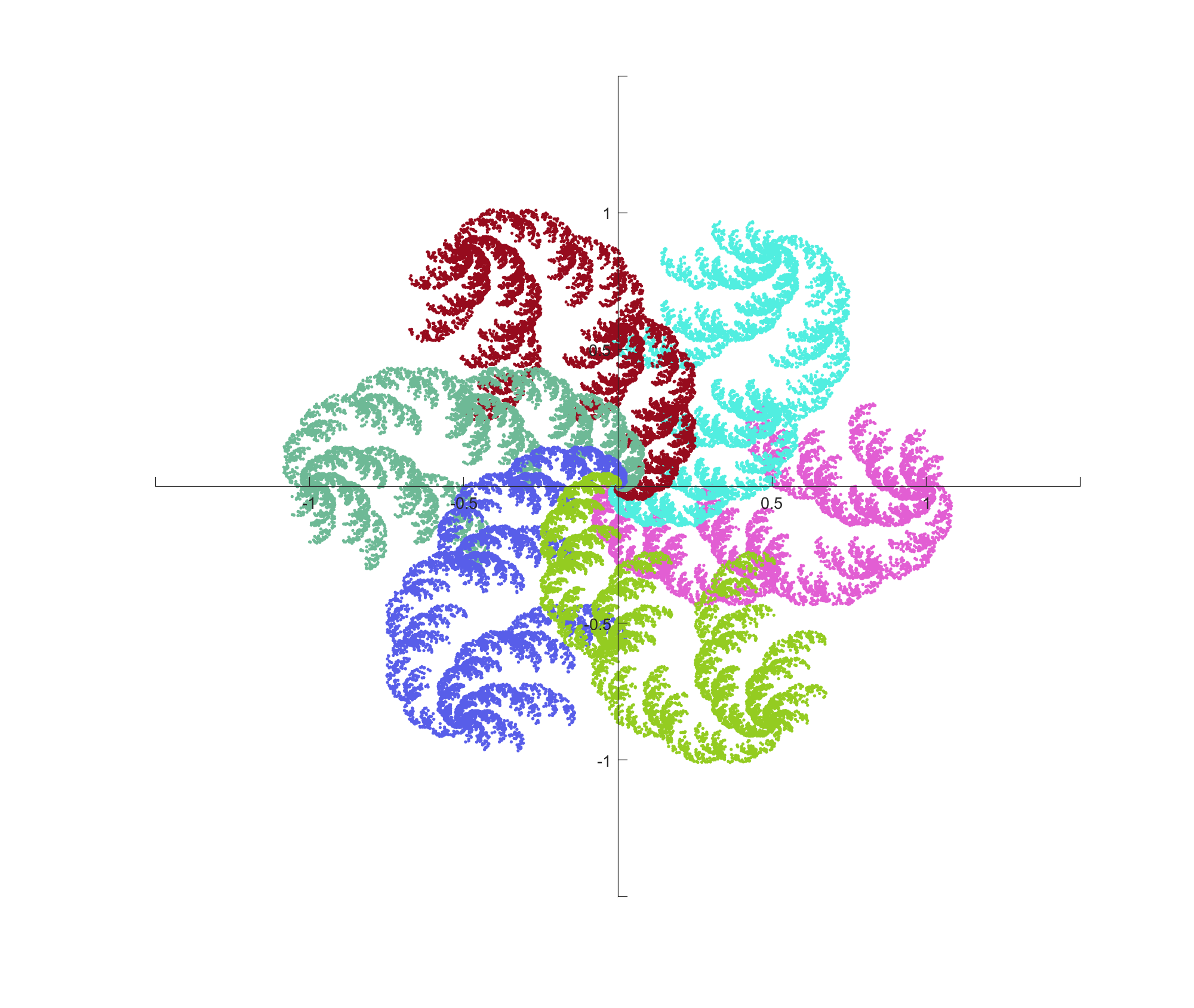}
\caption{$T_{\alpha, \beta, \theta}:\alpha=\frac12-\frac{\sqrt 3i}{6}, \beta=\bar{\alpha}$, $\theta=\frac{2\pi}{3}$ (left) and $\theta=\frac{\pi}{3} $ (right).} 
\label{figure4} 
\end{figure}

See Figure~\ref{figure4}. Computer simulations suggest that $T_{\alpha, \beta, \theta}$ must be a union of three Terdragons as a special case. 
Finally, we will prove the following main theorem.  

\medskip
Define a subset of $T_{\alpha,\beta,\theta}$ as follows.
\begin{equation*}
        T_{1,\alpha,\beta,\theta}:=\left\{\sum_{n=1}^{\infty}\delta_{n}\alpha^{n} (\beta / \alpha)^{b_n}:\delta_{j_{0}} = 1, (\delta_n) \in W_{\theta}^{3}, (b_n) \text{ is the BSS for } (\delta_n) \right\}
    \end{equation*}    
    where $j_{0}=\min\{j:\delta_{j}\neq 0\}$. 

\begin{theorem}
		\label{theorem:set equation}
		$T_{1,\alpha,\beta,\theta}$ satisfies the set equation
    \begin{equation*}
        T_{1,\alpha,\beta,\theta}=\Psi_1(T_{1,\alpha,\beta,\theta})\cup\Psi_2(T_{1,\alpha,\beta,\theta})\cup\Psi_3(T_{1,\alpha,\beta,\theta}),
    \end{equation*}
    where 
		\begin{equation}
\label{eq: IFS-T}
        \begin{cases}
            \Psi_1(z)=\alpha z, \\
            \Psi_2(z)=(\alpha e^{i\theta})z+\alpha, \\
            \Psi_3(z)=\alpha z + \beta,
        \end{cases}
    \end {equation}
where $\alpha,\beta\in\mathbb{C}$ such that $|\alpha|<1$ , $|\beta|<1$ and $|\theta|=\frac{2 \pi q}{p}\leq\pi$. 
\end{theorem}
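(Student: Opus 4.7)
The plan is to prove the set equation by showing both inclusions, exploiting a dictionary between the IFS maps and operations on ternary revolving sequences. Specifically, $\Psi_1(z)=\alpha z$ corresponds to prepending a zero to $(\delta_n)$; $\Psi_2(z)=(\alpha e^{i\theta})z+\alpha$ corresponds to prepending a $1$ marked as a ``move'' (BSS value $0$) and rotating the remaining tail by $e^{i\theta}$; and $\Psi_3(z)=\alpha z+\beta$ corresponds to prepending a $1$ marked as a ``stay'' (BSS value $1$). The interplay between the prepended element, the BSS convention, and the factor $(\beta/\alpha)^{b_n}$ is what lets the three maps be distinguished by the single pair $(\delta_1,b_1)$.

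For the inclusion $\Psi_1(T_{1,\alpha,\beta,\theta})\cup\Psi_2(T_{1,\alpha,\beta,\theta})\cup\Psi_3(T_{1,\alpha,\beta,\theta})\subseteq T_{1,\alpha,\beta,\theta}$, I take $z=\sum_{n=1}^{\infty}\delta_n\alpha^n(\beta/\alpha)^{b_n}\in T_{1,\alpha,\beta,\theta}$ and construct a new sequence $(\delta'_n)$ realizing $\Psi_i(z)$ as above, namely $\delta'_1=0$, $\delta'_n=\delta_{n-1}$ for $\Psi_1$; $\delta'_1=1$, $\delta'_n=e^{i\theta}\delta_{n-1}$ for $\Psi_2$; and $\delta'_1=1$, $\delta'_n=\delta_{n-1}$ for $\Psi_3$. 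Checking that $(\delta'_n)\in W_\theta^{3}$ reduces to verifying the TRC at positions $1$ and $2$: since $\delta_{j_0}=1$ forces $\delta_1\in\{0,1\}$, prepending a $1$ followed by $\delta_1$ or $e^{i\theta}\delta_1$ gives a valid ``stay''/``zero'' or ``move''/``zero'' respectively, and prepending a $0$ is trivially allowed. Confirming that the first nonzero of $(\delta'_n)$ equals $1$ and that the new BSS $(b'_n)$ is consistent with the shifted $(b_{n-1})$ is then routine.

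For the reverse inclusion, given $z\in T_{1,\alpha,\beta,\theta}$ with representation $(\delta_n)$, I split into three cases by $(\delta_1,b_1)$. If $\delta_1=0$, the shifted sequence $(\delta_{n+1})_{n\geq 1}$ is still in $W_\theta^{3}$ with first nonzero $1$, giving $z'\in T_{1,\alpha,\beta,\theta}$ and $z=\alpha z'=\Psi_1(z')$. If $\delta_1=1$ and $b_1=0$, the TRC and BSS definitions force the next nonzero of $(\delta_n)$, if it exists, to equal $e^{i\theta}$; the rotated shifted sequence $(e^{-i\theta}\delta_{n+1})$ is in $W_\theta^{3}$ with first nonzero $1$, and one checks $z=\alpha+(\alpha e^{i\theta})z'=\Psi_2(z')$. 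If $\delta_1=1$ and $b_1=1$, the next nonzero equals $1$ by definition of the BSS, the shifted sequence $(\delta_{n+1})$ has first nonzero $1$, and the leading term contributes $\alpha(\beta/\alpha)=\beta$, so $z=\beta+\alpha z'=\Psi_3(z')$.

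The main obstacle is the TRC and BSS bookkeeping under each sequence operation: showing that prepending, shifting, and rotation by $e^{\pm i\theta}$ preserve membership in $W_\theta^{3}$, keep the first nonzero equal to $1$, and translate BSSs correctly. The rotation step for $\Psi_2$ is the most delicate, since one must confirm that multiplying the entire tail by $e^{-i\theta}$ neither disrupts the ``stay''/``move'' distinctions recorded by $(b_n)$ (it does not, since the BSS compares ratios of consecutive nonzeros, which are invariant under global rotation) nor changes the first nonzero after cancellation with the removed $e^{i\theta}$ factor. Degenerate sequences with only finitely many nonzero terms also need careful attention, since the convention $b_n=0$ when no future nonzero exists must remain consistent between $z$ and its decomposed $z'$; in particular, one must either verify that the decomposed sequence still contains a nonzero term, or argue that the corresponding $z$ admits an alternative decomposition via a different $\Psi_i$.
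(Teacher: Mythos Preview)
Your approach is essentially identical to the paper's: both inclusions are proved by the same dictionary between $\Psi_1,\Psi_2,\Psi_3$ and the three cases $(\delta_1,b_1)\in\{(0,0),(1,0),(1,1)\}$, with the same shift, global rotation by $e^{\pm i\theta}$, and prepending operations on $(\delta_n)$ and $(b_n)$. The paper in fact glosses over the finitely-many-nonzeros edge case you flag at the end, so your proposal is, if anything, slightly more careful on that point.
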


\begin{proof} 

Let $z\in T_{1,\alpha,\beta,\theta}$. Then $z=\sum_{n=1}^{\infty}\delta_{n}\alpha^{n}(\beta/\alpha)^{b_n}$ for some $(\delta_{n})\in W_{\theta}^{3}$ and the corresponding BSS $(b_{n})$. Consider the ordered pair of points $(\delta_n, b_n)$. Since the first non-zero digit of $\delta_n$ is $1$, the only possible values for $(\delta_1,b_1)$ are $(0,0),(1,0)$, and $(1,1)$. 

\medskip

If $(\delta_{1},b_{1})=(0,0)$, set $\delta_{n}^{'}:=\delta_{n+1}$ and $b_{n}^{'}:=b_{n+1}$ for $n=1, 2, \dots$. Then $(\delta_{n}^{'})$ satisfies the TRC with its first nonzero digit equal to $1$ and $(b_{n}^{'})$ is the BSS for $(\delta_{n}^{'})$. Thus, we have  
	\begin{equation*}
	    z = \sum_{n=1}^{\infty}\delta_{n+1}\alpha^{n+1}(\beta /\alpha)^{b_{n+1}} 
			=\alpha\sum_{n=1}^{\infty}\delta_{n}^{'}\alpha^{n}(\beta /\alpha)^{b_{n}^{'}}\in\Psi_{1}(T_{1,\alpha,\beta,\theta}).
	\end{equation*}    

\medskip

If $(\delta_{1},b_{1})=(1,0)$, set $\delta_{n}^{'}:=(e^{-i\theta})\delta_{n+1}$ and $b_{n}^{'}:=b_{n+1}$ for $n=1,2,\dots$. Since $b_{1}=0$, the next nonzero digit of $\delta_{n}$ is $e^{i\theta}$. Then the sequence $(\delta_{n}^{'})$ satisfies the TRC with its first nonzero digit equal to $1$ and $(b_{n}^{'})$ is the BSS for $(\delta_{n}^{'})$. Thus, we have 
\begin{align*}
z &= \alpha + \sum_{n=1}^{\infty}\delta_{n+1}\alpha^{n+1}(\beta /\alpha)^{b_{n+1}}
   = \alpha+\alpha(e^{i\theta})\sum_{n=1}^{\infty}(e^{-i\theta})\delta_{n+1}\alpha^{n}(\beta /\alpha)^{b_{n+1}} \\
	&=\alpha+\alpha(e^{i\theta})\sum_{n=1}^{\infty}\delta_{n}^{'}\alpha^{n}(\beta /\alpha)^{b_{n}^{'}} \in\Psi_{2}(T_{1,\alpha,\beta,\theta}).
	\end{align*}

\medskip
	
If $(\delta_{1},b_{1})=(1,1)$, set $\delta_{n}^{'}:=\delta_{n+1}$ and $b_{n}^{'}:=b_{n+1}$. Since $b_{1}=1$, the next nonzero digit of $\delta_{n}$ is $1$. Then the sequence $(\delta_{n}^{'})$ satisfies the TRC with its first nonzero digit equal $1$  and $(b_{n}^{'})$ is the BSS for $(\delta_{n}^{'})$. Thus, we have 

	\begin{equation*}
	    z = \beta + \sum_{n=1}^{\infty}\delta_{n+1}\alpha^{n+1}(\beta /\alpha)^{b_{n+1}} 
	    = \beta + \alpha\sum_{n=1}^{\infty}\delta_{n}^{'}\alpha^{n}(\beta /\alpha)^{b_{n}^{'}} \in \Psi_{3}(T_{1,\alpha,\beta,\theta}).
	\end{equation*}
Thus, 
$$T_{1,\alpha,\beta,\theta} \subset \Psi_{1}(T_{1,\alpha,\beta,\theta})\cup \Psi_{2}(T_{1,\alpha,\beta,\theta}) \cup \Psi_{3}(T_{1,\alpha,\beta,\theta}).$$

\bigskip
The reverse inclusion follows analogously. If $z \in \Psi_{1}(T_{1, \alpha,\beta,\theta})$, then 
\begin{equation*}
z=\alpha \sum_{n=1}^{\infty}\delta_n\alpha^{n}(\beta/\alpha)^{b_n} 
\end{equation*}
for some $(\delta_{n})\in W_{\theta}^{3}$ and the corresponding BSS $(b_{n})$. Set 
\begin{equation*}
		\delta_j^{'}:=
			\begin{cases}
				0, &\text{ if }j=1 \\
				\delta_{j-1}, &\text{ if } j \geq 2, 
			\end{cases}
			\qquad 
		b_j^{'}:=
			\begin{cases}
				0, &\text{ if }j=1 \\
				b_{j-1}, &\text{ if } j \geq 2.
			\end{cases}
	\end{equation*}
	Since $(\delta_j^{'})$ satisfies the TRC with first nonzero digit equal to $1$ and $(b_j^{'})$ is the BSS for$(\delta_j^{'})$, we have   
	\begin{equation*}
    z=\sum_{n=2}^{\infty}\delta_{n-1}\alpha^{n}(\beta / \alpha)^{b_{n-1}}
		=\sum_{j=1}^{\infty}\delta_{j}^{'}\alpha^{j}(\beta / \alpha)^{b_{j}^{'}} \in T_{1,\alpha,\beta,\theta}.
	\end{equation*}

\medskip
  
If $z \in \Psi_{2}(T_{1,\alpha,\beta,\theta})$, then 
\begin{equation*}
z=\alpha+(\alpha e^{i\theta})\sum_{n=1}^{\infty}\delta_n\alpha^{n}(\beta/\alpha)^{b_n}.
\end{equation*}
for some $(\delta_{n})\in W_{\theta}^{3}$ and the corresponding BSS $(b_{n})$. Set   
\begin{equation*}
		\delta_j^{'}:=
			\begin{cases}
				1, &\text{ if }j=1 \\
				e^{i \theta}\delta_{j-1}, &\text{ if } j \geq 2 
			\end{cases}
	\qquad 
		b_j^{'}:=
			\begin{cases}
				0, &\text{ if }j=1 \\
				b_{j-1}, &\text{ if } j \geq 2.
			\end{cases}
	\end{equation*}
	
Notice that the sequence $(\delta_j^{'})$ satisfies the TRC with the first nonzero digit equal to $1$. Thus,  
	\begin{equation*}
    z=\delta_1^{'} \alpha (\beta/ \alpha)^{0}+\sum_{j=2}^{\infty}(e^{i \theta}\delta_{j-1}) \alpha^{j}(\beta / \alpha)^{b_{j-1}}
		=\sum_{j=1}^{\infty}\delta_{j}^{'}\alpha^{j}(\beta /\alpha)^{b_{j}^{'}} \in T_{1,\alpha,\beta,\theta}.
	\end{equation*}

Finally, if $z \in \Psi_{3}(T_{1,\alpha,\beta,\theta})$, then 
\begin{equation*}
z=\beta+\alpha \sum_{n=1}^{\infty}\delta_{n}\alpha^{n}(\beta / \alpha)^{b_n}.
\end{equation*}
Set 
	\begin{equation*}
		\delta_j^{'}:=
			\begin{cases}
				1, &\text{ if }j=1 \\
				\delta_{j-1}, &\text{ if } j \geq 2 
			\end{cases}
	\qquad 
	b_j^{'}:=
			\begin{cases}
				1, &\text{ if }j=1 \\
				b_{j-1}, &\text{ if} j \geq 2.
			\end{cases}
	\end{equation*} 
	Notice that both the first and second nonzero digits of $(\delta_j^{'})$ are $1$ and $(\delta_j^{'})$ satisfies the TRC.  
	Since $b_1^{'}=1$, $(b_{j}^{'})$ is the corresponding BSS for $(\delta_j^{'})$. So, 
	\begin{equation*}
    z=\beta+\sum_{n=2}^{\infty}\delta_{n-1}\alpha^{n}(\beta / \alpha)^{b_{n-1}}
		=\sum_{j=1}^{\infty}\delta_{j}^{'}\alpha^{j}(\beta / \alpha)^{b_{j}^{'}} \in T_{1,\alpha,\beta,\theta}.
	\end{equation*}
As a result, 
$$\Psi_{1}(T_{1,\alpha,\beta,\theta})\cup\Psi_{2}(T_{1,\alpha,\beta,\theta})\cup\Psi_{3}(T_{1,\alpha,\beta,\theta}) \subset T_{1,\alpha,\beta,\theta}.$$
\end{proof}

\begin{corollary}
The closure of $T_{1,\alpha,\beta,\theta}$ is the self-similar set generated by the iterated function system \eqref{eq: IFS-T}.
\end{corollary}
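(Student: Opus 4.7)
The plan is to invoke Hutchinson's theorem, which guarantees that the IFS \eqref{eq: IFS-T} has a unique non-empty compact attractor. It therefore suffices to show that $\overline{T_{1,\alpha,\beta,\theta}}$ is a non-empty compact set satisfying the set equation in \eqref{eq: IFS-T}, because any such set must coincide with the attractor.

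First I would verify that $T_{1,\alpha,\beta,\theta}$ is non-empty and bounded. Non-emptiness is immediate: taking $\delta_1=1$ and $\delta_n=0$ for all $n\ge 2$ produces the point $\alpha\in T_{1,\alpha,\beta,\theta}$. For boundedness, observe that $|\delta_n|\le 1$ and $|(\beta/\alpha)^{b_n}|\le \max(1,|\beta|/|\alpha|)$, so every element is bounded in modulus by $\max(1,|\beta|/|\alpha|)\sum_{n=1}^{\infty}|\alpha|^n$, which is finite since $|\alpha|<1$. Hence the closure $\overline{T_{1,\alpha,\beta,\theta}}$ is closed and bounded in $\mathbb{C}$, therefore compact.

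Next I would promote the set equation from Theorem~\ref{theorem:set equation} to the closure. Since each $\Psi_i$ is a continuous (in fact contractive) self-map of $\mathbb{C}$, and $\overline{T_{1,\alpha,\beta,\theta}}$ is compact, we have $\Psi_i\bigl(\overline{T_{1,\alpha,\beta,\theta}}\bigr)=\overline{\Psi_i(T_{1,\alpha,\beta,\theta})}$ (the containment $\supseteq$ holds because $\Psi_i(\overline{T_{1,\alpha,\beta,\theta}})$ is compact hence closed and contains $\Psi_i(T_{1,\alpha,\beta,\theta})$; the other containment is continuity). Using that the closure of a finite union is the union of the closures, the set equation from Theorem~\ref{theorem:set equation} yields
\begin{equation*}
\overline{T_{1,\alpha,\beta,\theta}}=\bigcup_{i=1}^{3}\overline{\Psi_i(T_{1,\alpha,\beta,\theta})}=\bigcup_{i=1}^{3}\Psi_i\bigl(\overline{T_{1,\alpha,\beta,\theta}}\bigr).
\end{equation*}

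Finally, Hutchinson's theorem, stated in the Introduction, asserts that the IFS \eqref{eq: IFS-T} has a \emph{unique} non-empty compact solution to this set equation; since $\overline{T_{1,\alpha,\beta,\theta}}$ has just been shown to be such a solution, it must equal the attractor. There is essentially no hard step here: the only point requiring mild care is the identity $\Psi_i(\overline{A})=\overline{\Psi_i(A)}$, which is the reason we work with the closure of $T_{1,\alpha,\beta,\theta}$ rather than with $T_{1,\alpha,\beta,\theta}$ itself, and the elementary bound ensuring boundedness.
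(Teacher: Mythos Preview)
Your argument is correct and is precisely the standard justification the paper leaves implicit: the Corollary is stated in the paper without its own proof, as an immediate consequence of the preceding set-equation theorem together with Hutchinson's uniqueness. Your explicit verification of non-emptiness, boundedness, and the passage of the set equation to the closure via $\Psi_i(\overline{A})=\overline{\Psi_i(A)}$ for compact $\overline{A}$ fills in exactly the routine details one is expected to supply.
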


\begin{remark}
We do not know if $T_{1,\alpha,\beta,\theta}$ is closed. The technique used in \cite{Kawamura-Allen-2020} to prove that $X_{1, \alpha, \theta}$ is closed can not be used for $T_{1,\alpha,\beta,\theta}$. The issue is that the BSS $(b_n)$ for $(\delta_n)$ is future dependent.  
\end{remark}

\section*{Acknowledgment}
The second author greatly appreciates Prof.~P.~Allaart for his helpful comments and suggestions in preparing this paper.

\end{document}